\def\draft{\fbox{\tiny draft: \today}}
\let\draft=\relax
\title{Conditioning in tropical probability theory}
\author[RM]{R. Matveev}
\author[JWP]{J. W. Portegies}
\begin{document}
\thispagestyle{fancy} 

\begin{abstract}
  We define a natural operation of conditioning of tropical diagrams
  of probability spaces and show that it is Lipschitz continuous with
  respect to the asymptotic entropy distance.
\end{abstract}

\maketitle

\son
\jon

\section{Introduction}
In ~\cite{Matveev-Asymptotic-2018}, ~\cite{Matveev-Tropical-2019} we have
initiated the study of tropical probability spaces and their diagrams:
In ~\cite{Matveev-Asymptotic-2018} we endowed (commutative) diagrams
of probability spaces with the intrinsic entropy distance and
in~\cite{Matveev-Tropical-2019} we defined tropical diagrams as points in
the asymptotic cone of the metric space. They are represented by
certain sequences of diagrams of probability spaces.

We expect that tropical diagrams will be helpful in the study of
information optimization problems, and we have indeed applied them to
derive a dimension-reduction result for the shape of the entropic cone
in \cite{Matveev-Tropical-Entropic-2019}.

In the present article we introduce the notion of conditioning on a
space in a tropical diagram and show that the operation is
Lipschitz-continuous with respect to the asymptotic entropy distance.

It is a rather technical result, and we have therefore decided to
treat it in this separate article, but it is an important ingredient
in the theory, and in particular we need it for the
dimension-reduction result mentioned before.

Given a tuple of finite-valued random variables $(\Xsf_{i})_{i=1}^{n}$
and a random variable $\Ysf$, one may ``condition'' the collection
$(\Xsf_{i})$ on $\Ysf$. The result of this operation is a family of
$n$-tuples of random variables denoted $(\Xsf_{i}\rel \Ysf)_{i=1}^{n}$
parameterized by those values of $\Ysf$ that have positive
probability. Each tuple of random variable in this family is defined
on a separate probability space.

When passing to the tropical setting the situation is different in the
sense that when we condition a tropical diagram $[\Xcal]$ on a space
$[Y]$, the result is again a tropical diagram $[\Xcal\rel Y]$ rather
than a family.  After recalling some preliminaries in
Section~\ref{se:prelim}, we describe the operation of conditioning and
prove that the result depends in a Lipschitz way on the original
diagram in Section~\ref{se:conditioning}.

\section{Preliminaries}
\label{se:prelim}
Our main objects of study are commutative diagrams of probability
spaces and their tropical counterparts. In this section we recall
briefly the main definitions and results.

\subsection{Probability spaces and their diagrams}
\subsubsection{Probability spaces}
By a \term{finite probability space} we mean a set with a
probability measure, that has finite support. A \term{reduction} from
one probability space to another is an equivalence class of
measure-preserving maps. Two maps are equivalent, if they coincide on a
set of full measure. We call a point $x$ in a probability space
$X=(\underline{X},p)$ an \term{atom} if it has positive weight and we
write $x\in X$ to mean $x$ is an atom in $X$ (as opposed to
$x\in\underline{X}$ for points in the underlying set). For a
probability space $X$ we denote by $|X|$ the cardinality of the
support of the probability measure.

\subsubsection{Indexing categories}
To record the combinatorial structure of a commutative diagrams of
probability spaces and reductions we use an object that we call an
\term{indexing category}. By an indexing category we mean a finite
category $\Gbf$ such that for any pair of objects $i,j\in\Gbf$ there
is at most one morphism between them either way. In addition, we will
assume it satisfies one additional property that we will describe
after introducing some terminology. For a pair of objects $i,j\in\Gbf$
such that there is a morphism $\gamma_{ij}:i\to j$, object $i$ will be
called an \term{ancestor} of $j$ and object $j$ will be called a
\term{descendant} of $i$. The subcategory of all descendants of an
object $i\in\Gbf$ is called an \term{ideal} generated by $i$ and will
be denoted $\lc i\rc$, while we will call the subcategory consisting
of all ancestors of $i$ together with all the morphisms in it a
\term{co-ideal} generated by $i$ and denote it by $\lf
i\rf$. (The term \term{filter} is also used for co-ideal in the
literature about lattices)

The additional property that an indexing
category has to satisfy is that for any pair of objects $i,j\in\Gbf$
there exists a \term{minimal common ancestor} $\hat\imath$, that is
$\hat\imath$ is an ancestor for both $i$ and $j$ and any other
ancestor of them both is also an ancestor of $\hat\imath$. 

An equivalent formulation of the property above is the following: the
intersection of the co-ideals generated by two objects
$i,j\in\Gbf$ is also a co-ideal generated by some object
$\hat\imath\in\Gbf$.

Any indexing category $\Gbf$ is necessarily \term{initial}, which
means that there exists an \term{initial object}, that is an object
$i_{0}$ such that $\Gbf=\lc i_{0}\rc$.

A \term{fan} in a category is a pair of morphisms with the same
domain.
A fan $(i\ot k\to j)$ is called
\term{minimal} if for any other fan $(i\ot l\to j)$ included
in a commutative diagram
\[
\begin{cd}[row sep=-1mm]
  \mbox{}
  \&
  k
  \arrow{dl}
  \arrow{dd}
  \arrow{dr}
  \\
  i
  \&\&
  j
  \\
  \&
  l
  \arrow{ul}
  \arrow{ur}
\end{cd}
\]
the vertical arrow must be an isomorphism.

For any pair of objects $i,j$ in an indexing category $\Gbf$ there
exists a unique minimal fan $(i\ot\hat\imath\to j)$ in $\Gbf$.

\subsubsection{Diagrams}
We denote by $\prob$ the category of finite probability spaces and
reductions. For an indexing category $\Gbf=\set{i;\, \gamma_{ij}}$, a
$\Gbf$-diagram is a functor $\Xcal:\Gbf\to\prob$. A reduction $f$ from one
$\Gbf$-diagram $\Xcal=\set{X_{i};\,\chi_{ij}}$ to another
$\Ycal=\set{Y_{i};\, \upsilon_{ij}}$ is a natural transformation
between the functors. It amounts to a collection of reductions
$f_{i}:X_{i}\to Y_{i}$ such that the big  diagram consisting
of all spaces $X_{i}$, $Y_{i}$ and all morphisms $\chi_{ij}$,
$\upsilon_{ij}$ and $f_{i}$ is commutative.  The category of
$\Gbf$-diagrams and reductions will be denoted $\prob\<\Gbf\>$.
The construction of diagrams could be iterated, thus we can consider
$\Hbf$-diagrams of $\Gbf$-diagrams and denote the corresponding
category $\prob\<\Gbf\>\<\Hbf\>=\prob\<\Gbf,\Hbf\>$. Every
$\Hbf$-diagram of $\Gbf$-diagrams can also be considered as
$\Gbf$-diagram of $\Hbf$-diagrams, thus there is a natural equivalence
of categories $\prob\<\Gbf,\Hbf\>\cong\prob\<\Hbf,\Gbf\>$.

A $\Gbf$-diagram $\Xcal$ will be called \term{minimal} if it maps
minimal fans in $\Gbf$ to minimal fans in the target category. 
The subspace of all minimal $\Gbf$-diagrams will be denoted
$\prob\<\Gbf\>_{\msf}$. 
In~\cite{Matveev-Asymptotic-2018} we have shown that for any fan in
$\prob$ or in $\prob\<\Gbf\>$ its minimization exists and is unique up to
isomorphism.  
\subsubsection{Tensor product}
The tensor product of two probability spaces $X=(\un X,p)$ and $Y=(\un
Y,q)$ is their independent product, $X\otimes Y:=(\un X\times\un
Y,p\otimes q)$ . For two $\Gbf$-diagrams
$\Xcal=\set{X_{i};\,\chi_{ij}}$ and
$\Ycal=\set{Y_{i};\,\upsilon_{ij}}$ we define their tensor product to
be $\Xcal\otimes\Ycal=\set{X_{i}\otimes\Ycal;\,
  \chi_{ij}\times\upsilon_{ij}}$.

\subsubsection{Constant diagrams} Given an indexing category $\Gbf$
and a probability space we can form a \term{constant} diagram $X^\Gbf$
that has all spaces equal to $X$ and all reductions equal to the identity
isomorphism. Sometimes when such constant diagram is included in a
diagram with another $\Gbf$-diagrams (such as, for example, a
reduction $\Xcal\to X^{\Gbf}$) we will write simply $X$ in place of
$X^{\Gbf}$.

\subsubsection{Entropy}
Evaluating entropy on every space in a $\Gbf$-diagram we obtain a
tuple of non-negative numbers indexed by objects in $\Gbf$, thus
entropy gives a map
\[
  \ent_{*}:\prob\<\Gbf\>\to\Rbb^{\Gbf}
\]
where the target space $\Rbb^{\Gbf}$ is a space of real-valued
functions on the set of objects in $\Gbf$ endowed with
the $\ell^{1}$-norm.
Entropy is a homomorphism in that it satisfies
\[
  \ent_{*}(\Xcal\otimes\Ycal)=\ent_{*}(\Xcal)+\ent_{*}(\Ycal)
\]

\subsubsection{Entropy distance}
Let $\Gbf$ be an indexing category and $\Kcal=(\Xcal\ot\Zcal\to\Ycal)$
be a fan of $\Gbf$-diagrams. We define the \term{entropy distance}
\[
  \kd(\Kcal)
  :=
  \left\| \ent_{*}\Zcal-\ent_{*}\Xcal \right\|_{1}
  +  \left\| \ent_{*}\Zcal-\ent_{*}\Ycal \right\|_{1}
\]
The \term{intrinsic entropy distance} between two $\Gbf$-diagrams is
defined to be the infimal entropy distance of all fans with terminal
diagrams $\Xcal$ and $\Ycal$
\[
  \ikd(\Xcal,\Ycal):=\inf\set{\kd(\Kcal)\st \Kcal=(\Xcal\ot\Zcal\to\Ycal)}
\]
The intrinsic entropy distance was introduced in \cite{Kovavcevic-Hardness-2012, Vidyasagar-Metric-2012} for probability spaces.

In~\cite{Matveev-Asymptotic-2018} it is shown that the infimum is attained,
that the optimal fan is minimal, that $\ikd$ is a pseudo-distance which
vanishes if and only if $\Xcal$ and $\Ycal$ are isomorphic and that
$\ent_{*}$ is a 1-Lipschitz linear functional with respect to $\ikd$.

\subsection{Diagrams of sets, distributions and empirical
  reductions}
\subsubsection{Distributions on sets}
For a set $S$ we denote by $\Delta S$ the collection of all
finitely-supported probability distributions on $S$. 
For a pair of
distributions $\pi_{1},\pi_{2}\in\Delta S$ we denote by 
$\left\| \pi_{1} -\pi_{2}\right\|_{1}$ the \term{total variation distance}
between them.

For a map $f:S\to S'$ between two sets we denote by
  $f_{*}:\Delta S\to\Delta S'$ the induced affine map (the map preserving convex
  combinations).

For $n\in\Nbb$ define the \term{empirical map} $\emp:S^{n}\to\Delta S$ by the
assignment below. For $\bar s=(s_{1},\dots,s_{n})\in S^{n}$ and
$A\subset S$ set
\[
\emp(\bar s)(A):=\frac1n \cdot\big|\!\set{k\st s_{k}\in A}\!\big|
\]
For a finite probability space $X=(S,p)$ the \term{empirical
  distribution} on $\Delta X$ is the push-forward
$\tau_{n}:=\emp_{*}p^{\otimes n}$. Thus
\[
\emp:X^{n}\to(\Delta X,\tau_{n})
\]
is a reduction of finite probability spaces. The construction of
empirical reduction is functorial, that is for a reduction between two
probability spaces $f:X\to Y$ the diagram of reductions
\[
\begin{cd}[row sep=small]
  X^{n}
  \arrow{r}{f^{n}}
  \arrow{d}{\emp}
  \&
  Y^{n}
  \arrow{d}{\emp}
  \\
  (\Delta X,\tau_{n})
  \arrow{r}{f_{*}}
  \&
  (\Delta Y,\tau_{n})
\end{cd}
\]
commutes.

\subsubsection{Distributions on diagrams of sets}
Let $\Set$ denote the category of sets and surjective maps.
For an indexing category $\Gbf$, we denote by $\Set\<\Gbf\>$ the category
of $\Gbf$-diagrams in $\Set$.  That is, objects in $\Set\<\Gbf\>$ are
commutative diagrams of sets indexed by $\Gbf$, the
spaces in such a diagram are sets and arrows represent
surjective maps, subject to commutativity relations.

For a diagram of sets $\Scal=\set{S_{i};\sigma_{ij}}$ we define the
\term{space of distributions
  on the diagram} $\Scal$ by
\[
\Delta\Scal
:=
\set{(\pi_{i})\in\prod_i\Delta S_{i}\st (\sigma_{ij})_{*}\pi_{i}=\pi_{j}}
\]
If $S_{0}$ is the initial set of
$\Scal$, then there is an isomorphism
\begin{align*} 
  \tageq{distributions-iso}      
  \Delta S_{0}&\oto[\cong]\Delta\Scal\\
  \Delta S_{0}\ni\pi_{0}
    &\mapsto
  \set{(\sigma_{0i})_{*}\pi_{0}}\in\Delta\Scal
  \\
  \Delta S_{0} \ni \pi_{0}
  &\leftmapsto
  \set{\pi_{i}}\in\Delta\Scal
\end{align*}

Given a $\Gbf$-diagram of sets $\Scal=\set{S_{i};\sigma_{ij}}$ and an
element $\pi\in\Delta\Scal$ we can construct a $\Gbf$-diagram of
probability spaces $(\Scal,\pi):=\set{(S_{i},\pi_{i});\sigma_{ij}}$.
Note that any diagram $\Xcal$ of probability spaces has this form.

\subsection{Conditioning}
Consider a $\Gbf$-diagram of probability spaces $\Xcal=(\Scal,\pi)$,
where $\Scal$ is a diagram of sets and $\pi\in\Delta\Scal$.  Let
$X_{0}=(S_{0},\pi_{0})$ be the initial space in $\Xcal$ and $U=X_{i}$
be another space in $\Xcal$. Since $S_{0}$ is initial, there is a map
$\sigma_{0,i}:S_{0}\to S_{i}$. Fix an atom $u\in U$ and define the
conditioned distribution $\pi_{0}(\cdot\rel u)$ on $S_{0}$ as the
distribution supported in $\sigma^{-1}_{0,i}(u)$ and for every
$s\in\sigma^{-1}_{0,i}(u)$ defined by
\[
\pi_{0}(s\rel u):=\frac{\pi_{0}(s)}{\pi_{0}(\sigma^{-1}_{0,i}(u))}
\]
Let $\pi(\cdot\rel u)\in\Delta\Scal$ be the distribution corresponding to
$\pi_{0}(\cdot\rel u)$ under the isomorphism in~(\ref{eq:distributions-iso}).
We define the \term{conditioned} $\Gbf$-diagram
$\Xcal\rel u:=(\Scal,\pi(\cdot\rel u))$.

\subsubsection{The Slicing Lemma}
In~\cite{Matveev-Asymptotic-2018} we prove the so-called Slicing Lemma that
allows to estimate the intrinsic entropy distance between two diagrams
in terms of distances between conditioned diagrams. 
Among the corollaries of the Slicing Lemma is the following
inequality.
\begin{proposition}{p:slicing}
  Let $(\Xcal\ot\hat\Xcal\to U^{\Gbf})\in\prob\<\Gbf,\Lambda_{2}\>$ be
  a fan of $\Gbf$-diagrams of probability spaces and
  $\Ycal\in\prob\<\Gbf\>$ be another diagram. Then
  \[
    \ikd(\Xcal,\Ycal)
    \leq
    \int_{U}\ikd(\Xcal\rel u,\Ycal)\d p(u) + 2\size{\Gbf}\cdot\ent U
  \]  
\end{proposition}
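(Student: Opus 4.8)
The plan is to bound $\ikd(\Xcal,\Ycal)$ from above by producing a single fan $(\Xcal\ot\Zcal\to\Ycal)$ whose entropy distance $\kd$ is at most the right-hand side, assembling it from near-optimal fans for the conditioned pieces. Since $U$ is finite, the integral $\int_U(\cdots)\d p(u)$ is the finite sum $\sum_{u\in U}p(u)(\cdots)$, so no measurability or integrability issue arises. For each atom $u\in U$ I would fix a fan $\Kcal_u=(\Xcal\rel u\ot\Zcal_u\to\Ycal)$ with $\kd(\Kcal_u)\le\ikd(\Xcal\rel u,\Ycal)+\epsilon$, and then let $\Zcal$ be the $p$-weighted disjoint union of the $\Zcal_u$: over each object $i$ the space $Z_i$ is the disjoint union of the $Z_{u,i}$, an atom of $Z_{u,i}$ of weight $q$ receiving weight $p(u)\cdot q$ in $Z_i$. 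The structural maps of the $\Zcal_u$ assemble block-diagonally into a genuine $\Gbf$-diagram $\Zcal$.

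Both legs then come for free. For the right leg, each reduction $\Zcal_u\to\Ycal$ pushes the law of $Z_{u,i}$ forward to the law $p_{Y_i}$ of $Y_i$; since this target law is the same for every $u$, the combined map $Z_i\to Y_i$ pushes $\sum_u p(u)\,q_{u,i}$ forward to $\sum_u p(u)\,p_{Y_i}=p_{Y_i}$, so $\Zcal\to\Ycal$ is a reduction. For the left leg, combining the $\Zcal_u\to\Xcal\rel u$ gives a map into the disjoint union, call it $\bar X_i$, of the spaces $X_i\rel u$; I compose it with the forgetful merge $\bar X_i\to X_i$. The law of the merge at $s\in X_i$ is $\sum_u p(u)\,p_i(s\rel u)=p_i(s)$ by the law of total probability, so $\Zcal\to\Xcal$ is a reduction as well.

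It remains to estimate $\kd=\|\ent_*\Zcal-\ent_*\Xcal\|_1+\|\ent_*\Zcal-\ent_*\Ycal\|_1$. A disjoint union carries its mixing entropy, so the grouping identity gives objectwise $\ent Z_i=\ent U+\sum_u p(u)\,(\ent_*\Zcal_u)_i$, that is $\ent_*\Zcal=\ent_* U^\Gbf+\sum_u p(u)\,\ent_*\Zcal_u$, where $\ent_* U^\Gbf$ is the constant vector with every entry $\ent U$ and $\|\ent_* U^\Gbf\|_1=\size\Gbf\cdot\ent U$. Against $\Ycal$, whose entropy vector is independent of $u$, the $\ell^1$ triangle inequality peels off $\|\ent_* U^\Gbf\|_1=\size\Gbf\cdot\ent U$ and leaves $\sum_u p(u)\|\ent_*\Zcal_u-\ent_*\Ycal\|_1$. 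For the $\Xcal$-leg the merge does not recreate the full mixing entropy: the $i$-th entry of $\ent_*\Xcal-\sum_u p(u)\,\ent_*(\Xcal\rel u)$ is the mutual information of $X_i$ and $U$, a number in $[0,\ent U]$. Hence $\ent_*\Zcal-\ent_*\Xcal$ equals $\sum_u p(u)\big(\ent_*\Zcal_u-\ent_*(\Xcal\rel u)\big)$ plus a vector whose $\size\Gbf$ entries all lie in $[0,\ent U]$, so $\|\ent_*\Zcal-\ent_*\Xcal\|_1\le\size\Gbf\cdot\ent U+\sum_u p(u)\|\ent_*\Zcal_u-\ent_*(\Xcal\rel u)\|_1$. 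Adding the two estimates yields
\[
\kd\le 2\size\Gbf\cdot\ent U+\sum_u p(u)\,\kd(\Kcal_u)\le 2\size\Gbf\cdot\ent U+\int_U\ikd(\Xcal\rel u,\Ycal)\,\d p(u)+\epsilon,
\]
and letting $\epsilon\to0$ completes the argument.

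The main obstacle is the asymmetry of the left leg. Its target is the disjoint union $\bar X_i$, which carries the full mixing entropy $\ent U$, whereas $\Xcal$ is obtained from $\bar X_i$ by a further merge that collapses this contribution to the mutual information $\ent X_i-\sum_u p(u)\,\ent(X_i\rel u)\le\ent U$. Controlling this defect objectwise is exactly what produces one of the two $\size\Gbf\cdot\ent U$ terms, the other arising from the mixing entropy measured against the unconditioned $\Ycal$; together they give the factor $2$. The only genuinely delicate point is that the merge remains measure-preserving, which is the total-probability computation above — everything else is $\ell^1$ bookkeeping. This mixture construction is precisely the relevant instance of the Slicing Lemma, specialized to a $\Ycal$ that carries no copy of $U$.
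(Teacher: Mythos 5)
Your argument is correct. Note that the paper itself gives no proof of this proposition --- it is quoted as a corollary of the Slicing Lemma established in \cite{Matveev-Asymptotic-2018} --- so there is nothing in this document to compare against line by line; but your mixture-of-couplings construction is exactly the mechanism behind that lemma. The key points all check out: the block-diagonal $\Zcal=\bigsqcup_u p(u)\,\Zcal_u$ is a genuine $\Gbf$-diagram; the right leg is measure-preserving because every $\Zcal_u$ targets the same $\Ycal$; the left leg is measure-preserving by total probability, and it commutes with the structure maps because $\Xcal\rel u$ has the same underlying set-diagram as $\Xcal$ with the conditioned consistent family of distributions (this is where the fan $\hat\Xcal$ is implicitly used, to make $p_i(\cdot\rel u)$ well defined even though $U$ need not sit inside $\Xcal$ itself). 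The entropy bookkeeping via the grouping identity, and the observation that against $\Xcal$ the mixing entropy is only partially cancelled by the mutual information $I(X_i;U)\in[0,\ent U]$, correctly produce the two $\size{\Gbf}\cdot\ent U$ terms. Since the paper even notes the infimum defining $\ikd$ is attained, the $\epsilon$ could be dispensed with, but that is cosmetic.
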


The fan in the assumption of the the proposition above can often be
constructed in the following manner. Suppose $\Xcal$ is a
$\Gbf$-diagram and $U=X_{\iota}$ is a space in it for some $\iota\in\Gbf$.
We can construct a fan $(\Xcal\ot[f]\hat\Xcal\to[g]
U^{\Gbf})\in\prob\<\Gbf,\Lambda_{2}\>$ by assigning $\hat X_{i}$ to be
the initial space of the (unique) minimal fan in $\Xcal$ with terminal spaces
$X_{i}$ and $U$ and $f_{i}$ and $g_{i}$ to be left and right
reductions in that fan, for any $i\in\Gbf$.

\subsection{Tropical Diagrams}\Label{s:tropical-diagrams}
A detailed discussion of the topics in this section can be found
in~\cite{Matveev-Tropical-2019}.
  
The asymptotic entropy distance between two diagrams of the same
combinatorial type is defined by
\[
  \aikd(\Xcal,\Ycal):=\lim\frac1n \ikd(\Xcal^{n},\Ycal^{n})
\]

A tropical $\Gbf$-diagram is an equivalence class of  certain
sequences of $\Gbf$-diagrams of probability spaces. Below we describe
the type of sequences and the equivalence relation.

A function $\phi:\Rbb\geq1\to\Rbb\geq0$ is called an \term{admissible
  function} if $\phi$ is non-decreasing and there is a constant
$D_{\phi}$ such that for any $t\geq1$
\[
  8t\cdot\int_{t}^{\infty}\frac{\phi(t)}{t^{2}}\d t\leq D_{\phi}\cdot\phi(t)
\]
An example of an admissible function will be $\phi(t)=t^{\alpha}$, for
$\alpha\in[0,1)$. 

A sequence $\bar\Xcal=(\Xcal(n):\,n\in\Nbb_{0})$ of diagrams of
probability spaces will be called \term{quasi-linear} with \term{defect}
bounded by an admissible function $\phi$ if it satisfies
\[
  \aikd\big(\Xcal(n+m),\Xcal(n)\otimes\Xcal(m)\big)\leq C\cdot\phi(n+m)
\]
For example for a diagram $\Xcal$, the sequence
$\bernoulli\Xcal:=(\Xcal^{n}:\, n\in\Nbb_{0})$ is $\phi$-quasi-linear for
$\phi\equiv0$ (and for any admissible $\phi$). Such sequences are
called \term{linear}.

The asymptotic entropic distance between two quasi-linear sequences
$\bar\Xcal=\big(\Xcal(n):\,n\in\Nbb_{0}\big)$ and
$\bar\Ycal=\big(\Ycal(n):\,n\in\Nbb_{0}\big)$ is defined to be 
\[
  \aikd(\bar\Xcal,\bar\Ycal):=\lim_{n\to\infty}\frac1n \ikd(\Xcal(n),\Ycal(n))
\]
and sequences are called \term{asymptotically equivalent} if
$\aikd(\bar\Xcal,\bar\Ycal)=0$. An equivalence class of a sequence
$\bar\Xcal$ will be denoted $[\Xcal]$ and the totality of all the
classes $\prob[\Gbf]$. The sum of two such equivalence classes is
defined to be the equivalence class of the sequence obtained by
tensor-multiplying representative sequences of the summands
term-wise. In addition there is a doubly transitive action of
$\Rbb_{\geq0}$ on $\prob[\Gbf]$. In~\cite{Matveev-Tropical-2019} the
following theorem is proven
\begin{theorem}{p:tropical-summary}
  Let $\Gbf$ be an indexing category. Then 
  \begin{enumerate}\def\theenumi{\roman{enumi}}
  \item 
    The space $\prob[\Gbf]$ does not depend on the
    choice of a positive admissible function $\phi$ up to isometry.
  \item 
    The space $\prob[\Gbf]$ is metrically complete.
  \item
    The map $\Xcal\mapsto\bernoulli\Xcal$ is a
    $\aikd$-$\aikd$-isometric embedding.  The space of linear
    sequences, i.e.~the image of the map above, is dense in
    $\prob[\Gbf]$.
  \item
    There is a distance-preserving homomorphism from $\prob[\Gbf]$
    into a Banach space $B$, whose image is a closed convex cone in
    $B$.
  \item The entropy functional
  \begin{align*}
  \ent_{*}:\prob[\Gbf]&\to\Rbb^{\Gbf}\\
  [\big(\Xcal(n)\big)_{n\in\Nbb_{0}}]
  &\mapsto
  \lim_{n\to\infty}
  \frac1n \ent_{*}\Xcal(n)
  \end{align*}
  is a well-defined 1-Lipschitz linear map.
  \end{enumerate}
\end{theorem}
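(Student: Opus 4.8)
The plan is to realize $\prob[\Gbf]$ as the metric completion of a rescaled image of $\prob\<\Gbf\>$ and to transport the algebraic structure ($\otimes$ as addition, the $\Rbb_{\geq0}$-scaling, and $\ent_{*}$) through the asymptotic limit. Before addressing the five items I would record the groundwork that the defining limit exists for quasi-linear sequences. Writing $a_{n}=\ikd(\Xcal(n),\Ycal(n))$, sub-additivity of the distance under $\otimes$ (a consequence of $\ent_{*}$ being additive and $1$-Lipschitz) combined with the quasi-linearity defect bounds for $\bar\Xcal$ and $\bar\Ycal$ controls $a_{n+m}$ by $a_{n}+a_{m}$ up to an error of order $\phi(n+m)$. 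A Fekete-type lemma with this error then yields convergence of $a_{n}/n$, the admissibility inequality $8t\cdot\int_{t}^{\infty}\frac{\phi(t)}{t^{2}}\,\d t\leq D_{\phi}\cdot\phi(t)$ being exactly what makes the accumulated defect summable along a dyadic decomposition.

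Parts (iii) and (v) I expect to be the most direct. For the isometric embedding, $\aikd(\bernoulli\Xcal,\bernoulli\Ycal)=\lim_{n}\tfrac1n\ikd(\Xcal^{n},\Ycal^{n})=\aikd(\Xcal,\Ycal)$ is essentially the definition, so $\Xcal\mapsto\bernoulli\Xcal$ preserves distance. Density follows by approximating a quasi-linear $\bar\Xcal$ with the rescaled linear classes $\tfrac1n[\bernoulli{\Xcal(n)}]$: iterating quasi-linearity shows $\tfrac1{nm}\ikd\big(\Xcal(n)^{m},\Xcal(nm)\big)$ is small for large $n$, which forces $\tfrac1n[\bernoulli{\Xcal(n)}]\to[\bar\Xcal]$ as $n\to\infty$. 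For (v), since $\ent_{*}$ is $1$-Lipschitz and additive on $\prob\<\Gbf\>$, the same defect estimate makes $\tfrac1n\ent_{*}\Xcal(n)$ Cauchy in $\Rbb^{\Gbf}$, so its limit defines $\ent_{*}[\bar\Xcal]$; linearity and the $1$-Lipschitz bound pass to the limit termwise.

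The main obstacle is completeness, part (ii). Given a $\aikd$-Cauchy sequence of tropical diagrams, I would first use density (iii) to replace each class by a linear sequence $\bernoulli{\Zcal_{k}}$, pass to a rapidly converging subsequence, and then assemble a diagonal sequence $\Wcal(n):=\Zcal_{k(n)}^{\,\ell(n)}$ for carefully chosen indices $k(n),\ell(n)$. The crux is to verify that $\bar\Wcal$ is again quasi-linear with defect bounded by an admissible function and that its class is the desired limit. Controlling the defect of $\bar\Wcal$ amounts to summing the per-block discrepancies, and once again the admissibility integral condition is precisely the hypothesis that keeps this sum finite; arranging the indices so that quasi-linearity and convergence hold simultaneously is the delicate point.

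Finally, (i) and (iv). For independence of $\phi$, the linear sequences (defect $\equiv0$) are admissible for every choice of $\phi$ and are dense by (iii), while $\aikd$ restricted to them never mentions $\phi$; hence the identity on linear sequences extends to an isometry between the two completions. For the Banach embedding, I would observe that $(\prob[\Gbf],+)$ is a commutative monoid carrying an $\Rbb_{\geq0}$-action for which $\aikd$ is homogeneous, $\aikd(\lambda x,\lambda y)=\lambda\,\aikd(x,y)$, and translation invariant, $\aikd(x+z,y+z)=\aikd(x,y)$. The inequality ``$\leq$'' in translation invariance is sub-additivity; the reverse direction, i.e.\ \emph{cancellation}, is the one genuinely nontrivial algebraic input, which I would isolate as a separate lemma. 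Granting it, one passes to the Grothendieck group of formal differences, extends $\aikd$ to a norm, and completes to obtain $B$; the image of $\prob[\Gbf]$ is convex because it is closed under $+$ and $\Rbb_{\geq0}$-scaling, and closed because $\prob[\Gbf]$ is complete by (ii).
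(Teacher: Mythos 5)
The first thing to say is that this paper contains no proof of Theorem~\ref{p:tropical-summary} to compare your proposal against: the theorem is explicitly imported from the companion work \cite{Matveev-Tropical-2019} (``In \cite{Matveev-Tropical-2019} the following theorem is proven''), and is used here only as background. So the comparison you were asked to survive cannot be made within this document; what follows is an assessment of your outline on its own terms.

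Your route is the expected one --- a Fekete-type lemma with admissible error for existence of the limits, density of linear sequences driving both (i) and (iii), a diagonal construction for completeness, and a Grothendieck-group completion for the Banach embedding --- and the admissibility integral is indeed the hypothesis that makes the accumulated defects summable. But the proposal is a sketch exactly at the two points you yourself flag, and neither is discharged. First, the cancellation property $\aikd(x+z,y+z)\geq\aikd(x,y)$ is the genuine mathematical content of (iv): without it the candidate distance on formal differences is not even well defined, and ``isolating it as a lemma'' is naming the problem, not solving it (it requires an argument about how $\ikd$ behaves under tensoring with a common factor, which does not follow from subadditivity or from the $1$-Lipschitz property of $\ent_{*}$). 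Second, for (ii) the verification that the diagonal sequence $\Wcal(n)=\Zcal_{k(n)}^{\ell(n)}$ is quasi-linear with an \emph{admissible} defect and actually represents the $\aikd$-limit is the entire difficulty of completeness; as written you have described the shape of the construction without the estimates that make it work. The remaining items, (i), (iii) and (v), are essentially correct as you present them.
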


\subsection{Asymptotic Equipartition Property for Diagrams}

Among all $\Gbf$-diagrams there is a special class of maximally
symmetric ones. We call such diagrams \term{homogeneous}, see below
for the definition. Homogeneous diagrams come very handy in many
considerations, because their structure is easier to describe then
that of general diagrams. We show below that among tropical diagrams,
those that have homogeneous representatives are dense. It means, in
particular, that when considering continuous functionals on the space
of diagrams, it suffices to only look at homogeneous diagrams.

\subsubsection{Homogeneous diagrams}

A $\Gbf$-diagram $\Xcal$ is called \term{homogeneous} if the
automorphism group $\Aut(\Xcal)$ acts transitively on every space in
$\Xcal$, by which we mean that the action is transitive on the
  support of the probability measure. Homogeneous probability spaces
are isomorphic to uniform spaces. For more complex
indexing categories this simple description is not sufficient.

\subsubsection{Tropical Homogeneous Diagrams}
The subcategory of all homogeneous $\Gbf$-diagrams will be denoted
$\Prob\<\Gbf\>_{\hsf}$ and we write $\Prob\<\Gbf\>_{\hsf,\msf}$ for
the category of minimal homogeneous $\Gbf$-diagrams. These spaces are
invariant under the tensor product, thus they are metric Abelian
monoids and the general ``tropicalization'' described
in~\cite{Matveev-Tropical-2019} can be performed.  Passing to the
tropical limit we obtain spaces of tropical (minimal) homogeneous
diagrams, that we denote by $\Prob[\Gbf]_{\hsf}$ and
$\Prob[\Gbf]_{\hsf,\msf}$, respectively.

\subsubsection{Asymptotic Equipartition Property}
In~\cite{Matveev-Asymptotic-2018} the following theorem is proven
\begin{theorem}{p:aep-complete}
  Suppose $\Xcal\in\prob\<\Gbf\>$ is a $\Gbf$-diagram of
  probability spaces for some fixed indexing category $\Gbf$.
  Then there exists a sequence
  $\bar\Hcal=(\Hcal_{n})_{n=0}^{\infty}$ of homogeneous
  $\Gbf$-diagrams such
  that
  \[\tageq{quantaep} 
    \frac{1}{n} 
    \ikd (\Xcal^{n},\Hcal_{n}) 
    \leq 
    C(|X_0|,\size{\Gbf}) \cdot \sqrt{\frac{\ln^3 n}{n}} 
  \]
  where $C(|X_0|, \size{\Gbf})$ is a constant only depending on $|X_0|$
  and $\size{\Gbf}$.
\end{theorem}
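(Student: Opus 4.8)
\textit{Strategy.} The plan is to run a method-of-types argument and feed it into the Slicing Lemma. Write $\Xcal=(\Scal,\pi)$ with initial space $X_{0}=(S_{0},\pi_{0})$, and for each $n$ consider $\Xcal^{n}$, whose initial space is $X_{0}^{n}=(S_{0}^{n},\pi_{0}^{\otimes n})$. The empirical map $\emp\colon S_{0}^{n}\to\Delta S_{0}$ realises the type space $Q_{n}:=(\Delta X_{0},\tau_{n})$ as a reduction of $X_{0}^{n}$. Since $X_{0}^{n}$ is initial in $\Xcal^{n}$, for each $i$ the fan $X_{i}^{n}\ot X_{0}^{n}\to Q_{n}$ (with legs the coordinatewise map $\sigma_{0i}^{n}$ and $\emp$) has a minimization $\hat X_{i}$, and, exactly as in the construction following Proposition~\ref{p:slicing} but now with the external space $Q_{n}$ adjoined along $\emp$, these assemble into a fan $(\Xcal^{n}\ot\hat\Xcal\to Q_{n}^{\Gbf})$. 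Conditioning on it fixes the empirical type $\theta$ of the initial coordinate, and since $\pi_{0}^{\otimes n}$ is uniform on each type class $T_{\theta}:=\emp^{-1}(\theta)$, the Slicing Lemma gives, for any homogeneous $\Hcal_{n}$,
\[
  \frac1n\,\ikd(\Xcal^{n},\Hcal_{n})
  \le
  \frac1n\int_{Q_{n}}\ikd(\Xcal^{n}\rel\theta,\Hcal_{n})\,\d\tau_{n}(\theta)
  +\frac{2\size{\Gbf}}{n}\,\ent Q_{n}.
\]
As there are at most $(n+1)^{|X_0|}$ types, $\ent Q_{n}\le|X_{0}|\ln(n+1)$, so the last term is $O(\ln n/n)$ and is harmless.

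\textit{Homogeneity and choice of $\Hcal_{n}$.} Next I would record that each $\Xcal^{n}\rel\theta$ is homogeneous: the symmetric group $\Sfrak_{n}$ permutes the tensor factors, fixes the empirical type, and hence acts on $\Xcal^{n}\rel\theta$; on the initial space it acts transitively on $T_{\theta}$, and every other space is an $\Sfrak_{n}$-equivariant quotient carrying the pushed-forward (hence invariant) measure, so the action is transitive on every space. Thus each $\Xcal^{n}\rel\theta$ is a legitimate homogeneous diagram, and I fix $\Hcal_{n}:=\Xcal^{n}\rel\theta_{n}^{\ast}$, where $\theta_{n}^{\ast}$ is a type closest to $\pi_{0}$ in $\ell^{1}$.

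\textit{Estimating the integral.} I would split over the typical set $G_{n}:=\{\theta:\|\theta-\pi_{0}\|_{1}\le\delta_{n}\}$ with $\delta_{n}:=c\sqrt{\ln n/n}$ and its complement. On $G_{n}^{c}$, Sanov's theorem and Pinsker's inequality bound $\tau_{n}(G_{n}^{c})$ by $(n+1)^{|X_{0}|}e^{-n\delta_{n}^{2}/2}=n^{|X_{0}|-c^{2}/2}$, which for $c$ large (depending on $|X_{0}|$) is $O(n^{-1})$; combined with the crude bound $\ikd(\Xcal^{n}\rel\theta,\Hcal_{n})\le 2\size{\Gbf}\,n\ln|X_{0}|$ (obtained by passing through the one-point diagram), the tail contributes within budget. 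The heart of the matter is the typical estimate
\[
  \frac1n\,\ikd(\Xcal^{n}\rel\theta,\Xcal^{n}\rel\theta')
  =O\!\left(\sqrt{\tfrac{\ln^{3}n}{n}}\right)
  \qquad(\theta,\theta'\in G_{n}),
\]
since $\theta_{n}^{\ast}\in G_{n}$. To prove it I would exhibit an explicit coupling: because $\|\theta-\theta'\|_{1}\le2\delta_{n}$, a type-$\theta$ and a type-$\theta'$ sequence in $S_{0}^{n}$ can be matched so as to differ in only $k=O(n\delta_{n})=O(\sqrt{n\ln n})$ coordinates. Performed coordinatewise on the initial space, this transport pushes forward through each coordinatewise reduction $\sigma_{0i}^{n}$ and so defines a genuine fan of $\Gbf$-diagrams $(\Xcal^{n}\rel\theta\ot\Zcal\to\Xcal^{n}\rel\theta')$; for each $i$ the conditional entropy $H(Z_{i}\mid X_{i}^{n}\rel\theta)$ is at most $\ln\!\big(\binom{n}{k}|X_{0}|^{k}\big)=O(\sqrt{n\ln^{3}n})$, and symmetrically with $\theta,\theta'$ exchanged, so summing $\kd$ over the $\size{\Gbf}$ objects and dividing by $n$ yields the displayed bound.

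\textit{Main obstacle.} The delicate step is this last coupling. Two points need care: that the coordinatewise transport can be chosen to descend simultaneously through \emph{every} reduction in the diagram (this is exactly why the construction is carried out on the initial space, which dominates all others, and uses functoriality of coordinatewise operations), and that the conditional-entropy bookkeeping yields a constant depending only on $|X_{0}|$ and $\size{\Gbf}$. Assembling the three contributions then gives $\tfrac1n\ikd(\Xcal^{n},\Hcal_{n})\le C(|X_{0}|,\size{\Gbf})\sqrt{\ln^{3}n/n}$; the exponent $3/2$ on $\ln n$ is forced by the term $\ln\binom{n}{k}$ with $k\sim\sqrt{n\ln n}$, equivalently by the entropy-continuity modulus $\delta_{n}\ln(1/\delta_{n})$.
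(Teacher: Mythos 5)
The paper does not actually prove Theorem~\ref{p:aep-complete}: it is quoted from the companion work \cite{Matveev-Asymptotic-2018}, so there is no in-paper argument to compare yours against. That said, your sketch is essentially the argument used there, and it is sound: you condition $\Xcal^{n}$ on the empirical type of the initial coordinate sequence, note that each $\Xcal^{n}\rel\theta$ is homogeneous because the diagonal $\Sfrak_{n}$-action is by automorphisms, acts transitively on the type class $T_{\theta}$ (on which $\pi_{0}^{\otimes n}$ conditions to the uniform measure), and pushes transitively onto the support of every other space through the equivariant coordinatewise reductions; the Slicing Lemma then reduces everything to comparing conditioned diagrams at the cost of $2\size{\Gbf}\ent Q_{n}=O(\ln n)$. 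The two points you flag as delicate do go through: the fan over $Q_{n}^{\Gbf}$ exists because minimizations of two-fans exist and are unique, and the coupling between $\Xcal^{n}\rel\theta$ and $\Xcal^{n}\rel\theta'$ can be realized concretely as the diagonal $\Sfrak_{n}$-orbit of a single pair $(\bar s,\bar s')\in T_{\theta}\times T_{\theta'}$ matched so as to differ in $k=\tfrac n2\|\theta-\theta'\|_{1}$ coordinates, sitting inside $(\Xcal\otimes\Xcal)^{n}$; since every $S_{i}^{n}$ is a coordinatewise quotient of $S_{0}^{n}$, the fibre of $Z_{i}$ over $X_{i}^{n}\rel\theta$ has at most $\binom{n}{k}|X_{0}|^{k}$ atoms at every object $i$ simultaneously, which gives the uniform bound $\ln\binom{n}{k}+k\ln|X_{0}|=O(\sqrt{n\ln^{3}n})$ and hence, after summing over $\Gbf$ and adding the $O(n^{-1})$ atypical contribution from the method-of-types/Pinsker tail bound, the claimed rate with a constant depending only on $|X_{0}|$ and $\size{\Gbf}$.
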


The approximating sequence of homogeneous diagrams is evidently
quasi-linear with the defect bounded by the admissible function
\[
  \phi(t)
  :=
   2C(|X_0|,\size{\Gbf})\cdot t^{3/4}
   \geq 
   2C(|X_0|,\size{\Gbf})\cdot t^{1/2}\cdot \ln^{3/2}t
\]
Thus, Theorem~\ref{p:aep-complete} above states that
$\lin(\prob\<\Gbf\>)\subset\prob[\Gbf]_{\hsf}$. On the other hand we
have shown in~\cite{Matveev-Tropical-2019}, that the space of linear
sequences $\lin(\prob\<\Gbf\>)$ is dense in $\prob[\Gbf]$. Combining
the two statements we get the following theorem.

\begin{theorem}{p:aep-tropical} For any indexing category $\Gbf$, the space 
  $\prob[\Gbf]_{\hsf}$ is dense in $\prob[\Gbf]$. Similarly, the space
  $\prob[\Gbf]_{\hsf,\msf}$ is dense in $\prob[\Gbf]_{\msf}$. 
\end{theorem}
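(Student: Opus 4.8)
The plan is to deduce both assertions from two results already in hand: the Asymptotic Equipartition Property (Theorem~\ref{p:aep-complete}), which manufactures homogeneous approximants of any diagram, and the density of linear sequences in the tropical cone (Theorem~\ref{p:tropical-summary}~(iii)). The elementary topological observation I will lean on throughout is that if $L\subseteq H\subseteq M$ with $L$ dense in the metric space $M$, then $H$ is dense in $M$ as well, since $\overline{H}\supseteq\overline{L}=M$. I will apply this once with $M=\prob[\Gbf]$, $L=\lin(\prob\<\Gbf\>)$, $H=\prob[\Gbf]_{\hsf}$, and once more inside the minimal category.

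For the first assertion, fix $\Xcal\in\prob\<\Gbf\>$. Theorem~\ref{p:aep-complete} produces a sequence $\bar\Hcal=(\Hcal_n)$ of homogeneous $\Gbf$-diagrams with $\frac1n\ikd(\Xcal^n,\Hcal_n)\le C\sqrt{\ln^3 n/n}$, and, as recorded immediately after that theorem, $\bar\Hcal$ is quasi-linear with an admissible defect; hence it represents a point of $\prob[\Gbf]_{\hsf}$. The displayed bound tends to $0$, so $\aikd(\bernoulli\Xcal,\bar\Hcal)=0$, i.e.\ the linear representative $\bernoulli\Xcal$ of $[\Xcal]$ is asymptotically equivalent to the homogeneous sequence $\bar\Hcal$. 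Thus $[\Xcal]\in\prob[\Gbf]_{\hsf}$, giving $\lin(\prob\<\Gbf\>)\subseteq\prob[\Gbf]_{\hsf}$. Combined with the density of $\lin(\prob\<\Gbf\>)$ in $\prob[\Gbf]$ from Theorem~\ref{p:tropical-summary}~(iii), the observation above yields density of $\prob[\Gbf]_{\hsf}$.

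The minimal statement carries the real content. Starting from a minimal diagram $\Xcal\in\prob\<\Gbf\>_{\msf}$, the homogeneous diagrams $\Hcal_n$ from the AEP need not be minimal, so I would replace them by their minimizations, which I denote $\mix\Hcal_n$. Here I invoke three properties of the minimization operation of~\cite{Matveev-Asymptotic-2018}: it maps homogeneous diagrams to minimal homogeneous diagrams, so $\mix\Hcal_n\in\prob\<\Gbf\>_{\hsf,\msf}$; it is $1$-Lipschitz for $\ikd$; and tensor powers of a minimal diagram remain minimal, so that $\mix\Xcal^n=\Xcal^n$. The last two give $\ikd(\Xcal^n,\mix\Hcal_n)=\ikd(\mix\Xcal^n,\mix\Hcal_n)\le\ikd(\Xcal^n,\Hcal_n)$, so the minimized sequence still converges to $\bernoulli\Xcal$ at the AEP rate. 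To see that $(\mix\Hcal_n)$ is itself a legitimate quasi-linear sequence, I would bound its defect by the triangle inequality together with subadditivity of $\ikd$ under tensor product, estimating $\ikd(\mix\Hcal_{n+m},\mix\Hcal_n\otimes\mix\Hcal_m)$ by $\ikd(\mix\Hcal_{n+m},\Xcal^{n+m})+\ikd(\mix\Hcal_n,\Xcal^n)+\ikd(\mix\Hcal_m,\Xcal^m)$; each term is at most $C\sqrt{k\ln^3 k}=Ck^{1/2}\ln^{3/2}k$ for the relevant $k\in\{n,m,n+m\}$, which is dominated by the admissible function $t\mapsto t^{3/4}$ exactly as observed after Theorem~\ref{p:aep-complete}. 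This establishes $\lin(\prob\<\Gbf\>_{\msf})\subseteq\prob[\Gbf]_{\hsf,\msf}$.

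To finish, I would note that $\prob\<\Gbf\>_{\msf}$ is again a metric Abelian monoid under the tensor product, so the general tropicalization of~\cite{Matveev-Tropical-2019} applies verbatim and shows that $\lin(\prob\<\Gbf\>_{\msf})$ is dense in $\prob[\Gbf]_{\msf}$; the same observation then forces $\prob[\Gbf]_{\hsf,\msf}$ to be dense in $\prob[\Gbf]_{\msf}$. The one genuine obstacle is the bookkeeping around the minimization operation in the minimal case, namely confirming that $\mix$ preserves homogeneity, is non-expanding, and fixes tensor powers of minimal diagrams; everything else is a formal consequence of the AEP and the density of linear sequences.
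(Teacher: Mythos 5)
Your argument is correct and coincides with the paper's: the theorem is obtained there by exactly the same combination of Theorem~\ref{p:aep-complete} (which gives $\lin(\prob\<\Gbf\>)\subset\prob[\Gbf]_{\hsf}$ via the quasi-linearity of the approximating homogeneous sequence) with the density of linear sequences from Theorem~\ref{p:tropical-summary}~(iii). The only difference is that the paper dispatches the minimal case with the single word ``Similarly'', whereas you spell out the minimization bookkeeping explicitly; the properties of $\mix$ you rely on are imported from~\cite{Matveev-Asymptotic-2018} rather than proved here, which matches the level of detail of the original.
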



\section{Conditioning of Tropical Diagrams}
\label{se:conditioning}
\subsection{Motivation}

Let $\Xcal\in\prob\<\Gbf\>$ be a $\Gbf$-diagram of probability
spaces containing probability space $U=X_{i_{0}}$ indexed by an
object $i_{0}\in\Gbf$.

Given an atom $u\in U$ we can define a conditioned
diagram $\Xcal\rel u$. If the diagram $\Xcal$ is homogeneous, then the
isomorphism class of $\Xcal\rel u$ is independent of $u$, so that
$\Xcal\rel u$ is a constant family. On the other hand we have shown,
that the power of any diagram can be approximated by homogeneous
diagrams, thus suggesting that in the tropical setting $\Xcal\rel U$
should be a well-defined tropical diagram, rather than a family. Below
we give a definition of tropical conditioning operation and prove its
consistency.

\subsection{Classical-tropical conditioning}
Here we define the operation of conditioning of classical diagram,
such that the result is a tropical diagram.
Let $\Xcal$ be a $\Gbf$-diagram of probability spaces and $U$ be a
space in $\Xcal$.
We define the conditioning map
\[
  [\cdot\,\rel\cdot]:
  \prob\<\Gbf\>
  \to
  \prob[\Gbf]
\]
by conditioning $\Xcal$ by $u\in U$ and averaging the corresponding
tropical diagrams:
\[
  [\Xcal\rel U]
  := 
  \int_{u\in U}\bernoulli{(\Xcal\rel u)}\d p_{U}(u)
\]
where $\bernoulli{(\Xcal\rel u)}$ is the tropical diagram represented
by a linear sequence generated by $\Xcal\rel u$,
see section~\ref{s:tropical-diagrams}.
Note that the integral on the right-hand side is just a finite convex
combination of tropical diagrams. 
Expanding all the definitions we will get for
$[\Ycal]:=[\Xcal\rel U]$ the representative sequence
\[
  \Ycal(n)
  = 
  \bigotimes_{u\in U}(\Xcal\rel u)^{\lfloor n\cdot p(u)\rfloor}
\]

\subsection{Properties}
\subsubsection{Conditioning of Homogeneous Diagrams} 
If the diagram $\Xcal$ is \emph{homogeneous}, then for any atom $u\in
U$ with positive weight
\[
  [\Xcal\rel U] 
  \,\aeq\, 
  \bernoulli{(\Xcal\rel u)}
\]

\subsubsection{Entropy}
Recall that earlier we have defined a quantity 
\[
\ent_{*}(\Xcal\rel U):=\int_{U}\ent_{*}(\Xcal\rel u)\d p_{U}(u)
\]
Now that $[\Xcal\rel U]$ is a tropical diagram, the
expression $\ent_{*}(\Xcal\rel U)$ can be interpreted in two, a priori
different, ways: by the formula above and as the entropy of the object
introduced in the previous subsection. Fortunately, the numeric value
of it does not depend on the interpretation, since the entropy is a
linear functional on $\prob[\Gbf]$.
\subsubsection{Additivity}
If $\Xcal$ and $\Ycal$ are two $\Gbf$-diagrams with $U:=X_{\iota}$,
$V:=Y_{\iota}$ for some $\iota\in\Gbf$, then
\[
  [(\Xcal\otimes\Ycal)\rel (U\otimes V)]
  =
  [\Xcal\rel U]
  +
  [\Ycal\rel V]
\]
\begin{proof}
\begin{align*}
  [(\Xcal\otimes&\Ycal)\rel (U\otimes V)]
  =
  \int_{U\otimes V}
    \bernoulli{(\Xcal\otimes\Ycal)\rel (u,v)}
  \d p(u) \d p(v)
  \\
  &=
  \int_{U\otimes V}
    (\bernoulli{\Xcal\rel u}
    +
    \bernoulli{\Ycal\rel v})
  \d p(u) \d p(v)
  =
  \int_{U}
    \bernoulli{\Xcal\rel u}
  \d p(u)
  +
  \int_{V}
    \bernoulli{\Ycal\rel v}
  \d p(v)
  \\
  &=
  [\Xcal\rel U]
  +
  [\Ycal\rel V]
\end{align*}
\end{proof}

\subsubsection{Homogeneity}\Label{s:cond-homo}
It follows that for any diagram $\Xcal$ with a space $U$ and
$n\in\Nbb_{0}$ holds
\[
  [\Xcal^{n}\rel U^{n}] 
  =
  n\cdot [\Xcal\rel U]
\]

\subsection{Continuity and Lipschitz property}
\begin{proposition}{p:cond-lip}
  Let $\Gbf$ be a complete poset category,
  $\Xcal,\Ycal\in\prob\<\Gbf\>$ be two $\Gbf$ diagrams, $U:=X_{\iota}$
  and $V:=Y_{\iota}$ be two spaces in $\Xcal$ and $\Ycal$,
  respectively, indexed by some $\iota\in\Gbf$.
  Then
  \[
    \aikd\Big([\Xcal\rel U],\,[\Ycal\rel V]\Big)
    \leq
    (2\cdot\size{\Gbf}+1)\cdot\ikd\left(\Xcal,\Ycal\right)
  \]
\end{proposition}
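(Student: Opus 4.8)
The plan is to route the comparison through the optimal fan realizing $\ikd(\Xcal,\Ycal)$ and to separate two kinds of cost: replacing a diagram by a reduction of it, and refining the space on which one conditions. Concretely, I would fix a minimal fan $(\Xcal\ot[f]\Zcal\to[g]\Ycal)$ with $\ikd(\Xcal,\Ycal)=a+b$, where $a:=\|\ent_*\Zcal-\ent_*\Xcal\|_1$ and $b:=\|\ent_*\Zcal-\ent_*\Ycal\|_1$, and set $W:=Z_\iota$, so that $f_\iota\colon W\to U$ and $g_\iota\colon W\to V$ are reductions. The tropical diagram $[\Zcal\rel W]$ serves as a bridge, and by the triangle inequality it suffices to establish the two one-sided bounds $\aikd([\Xcal\rel U],[\Zcal\rel W])\le a+2\size\Gbf(\ent W-\ent U)$ and $\aikd([\Ycal\rel V],[\Zcal\rel W])\le b+2\size\Gbf(\ent W-\ent V)$. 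Since $W$ reduces onto $U$ and $V$, the quantities $\ent W-\ent U$ and $\ent W-\ent V$ are nonnegative and are bounded by single coordinates of the respective $\ell^1$-differences, hence by $a$ and $b$; summing the two bounds then gives $\aikd([\Xcal\rel U],[\Ycal\rel V])\le (a+b)+2\size\Gbf(a+b)=(2\size\Gbf+1)\ikd(\Xcal,\Ycal)$.

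For one leg, say $f$, I would interpose the intermediate $[\Zcal\rel U]$, obtained by conditioning $\Zcal$ on the $U$-valued reduction of its initial space; this is legitimate because $f(\Zcal\rel u)=\Xcal\rel u$ for every atom $u$. The first part, $\aikd([\Xcal\rel U],[\Zcal\rel U])$, compares two convex combinations over $u\in U$ with the same weights $p_U$, namely $\int_U\bernoulli{(\Xcal\rel u)}\d p_U(u)$ and $\int_U\bernoulli{(\Zcal\rel u)}\d p_U(u)$. Because $\prob[\Gbf]$ embeds isometrically as a convex cone in a Banach space (Theorem~\ref{p:tropical-summary}), the distance is convex along such combinations, so this part is at most $\int_U\aikd(\bernoulli{(\Xcal\rel u)},\bernoulli{(\Zcal\rel u)})\d p_U(u)\le\int_U\ikd(\Xcal\rel u,\Zcal\rel u)\d p_U(u)$, using $\aikd(\bernoulli A,\bernoulli B)=\aikd(A,B)\le\ikd(A,B)$. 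As $f$ restricts to a reduction $\Zcal\rel u\to\Xcal\rel u$, the integrand is bounded by $\|\ent_*(\Zcal\rel u)-\ent_*(\Xcal\rel u)\|_1$; summing over objects and noting that $\ent(Z_j\rel u)\ge\ent(X_j\rel u)$ lets me drop the absolute values, while a data-processing inequality (for each object $j$, $X_j$ is a function of $Z_j$, so $\ent(Z_j\rel U)-\ent(X_j\rel U)\le \ent Z_j-\ent X_j$) collapses the integral to $\|\ent_*\Zcal-\ent_*\Xcal\|_1=a$.

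The second part, $\aikd([\Zcal\rel U],[\Zcal\rel W])$, I would again handle by grouping atoms of $W$ over their images in $U$: writing $W_u:=f_\iota^{-1}(u)$, one has $[\Zcal\rel W]=\int_U[(\Zcal\rel u)\rel W_u]\d p_U(u)$, so convexity reduces the task to bounding, for each $u$, the self-conditioning distance $\aikd(\bernoulli\Ccal,[\Ccal\rel C_\iota])$ with $\Ccal:=\Zcal\rel u$ and $C_\iota:=W_u$. The key estimate is $\aikd(\bernoulli\Ccal,[\Ccal\rel C_\iota])\le 2\size\Gbf\cdot\ent C_\iota$, which I would prove by applying the Slicing Lemma (Proposition~\ref{p:slicing}) to the pair $(\Ccal^n,R(n))$, where $R(n)=\bigotimes_{c}(\Ccal\rel c)^{\lfloor n p(c)\rfloor}$ represents $[\Ccal\rel C_\iota]$: the Slicing term contributes $2\size\Gbf\cdot\ent(C_\iota^n)=2\size\Gbf\, n\,\ent C_\iota$, while the integral term is $O(\sqrt n)$, since $\Ccal^n\rel\bar c=\bigotimes_c(\Ccal\rel c)^{N_c(\bar c)}$ differs from $R(n)$ only through the empirical fluctuations $|N_c(\bar c)-np(c)|$, whose expectations are $O(\sqrt n)$. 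Dividing by $n$ kills the integral term and leaves $2\size\Gbf\,\ent C_\iota$; integrating over $u$ with $\int_U\ent W_u\,\d p_U(u)=\ent W-\ent U$ yields the claimed bound $2\size\Gbf(\ent W-\ent U)$.

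The main obstacle is precisely this self-conditioning estimate. Everything else (the triangle and convexity reductions, the reduction-cost bound, and the data-processing collapse) is comparatively routine, but the bound $\aikd(\bernoulli\Ccal,[\Ccal\rel C_\iota])\le 2\size\Gbf\,\ent C_\iota$ is where the structure is really used: one must check that the Slicing Lemma applies to the powers $\Ccal^n$ with conditioning space $C_\iota^n$, and that the multinomial fluctuations of the empirical counts $N_c$ make the averaged distance $\int \ikd(\Ccal^n\rel\bar c,R(n))\,\d p^{\otimes n}$ sublinear in $n$, so that after normalization only the entropy term $2\size\Gbf\,\ent C_\iota$ survives and the factor $2\size\Gbf$ is inherited directly from the Slicing Lemma.
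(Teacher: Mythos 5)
Your proposal is correct and follows essentially the same route as the paper: the same optimal fan $(\Xcal\ot\Zcal\to\Ycal)$, the same chain of intermediate diagrams $[\Zcal\rel U]$, $[\Zcal\rel W]$, $[\Zcal\rel V]$, convexity plus $\int_U\kd(f\rel u)\,\d p(u)\leq\kd(f)$ for the reduction legs, and the same key self-conditioning estimate $\aikd(\bernoulli{\Acal},[\Acal\rel E])\leq 2\size{\Gbf}\cdot\ent(E)$ proved via the Slicing Lemma together with a sublinear empirical-fluctuation term (the paper phrases this via Sanov's theorem, you via multinomial deviations, which is the same point).
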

Using homogeneity property of conditioning, Section~\ref{s:cond-homo},
we can obtain the following stronger inequality.
\begin{corollary}{p:cond-lip-aikd}
  In the setting of Proposition~\ref{p:cond-lip} holds
  \[
    \aikd\Big([\Xcal\rel U],\,[\Ycal\rel V]\Big)
    \leq
    (2\cdot\size{\Gbf}+1)\cdot\aikd\left(\Xcal,\Ycal\right)
  \]
\end{corollary}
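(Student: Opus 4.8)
The plan is to bootstrap from Proposition~\ref{p:cond-lip} by applying it to tensor powers and then passing to the tropical limit, the key enabler being the homogeneity relation of Section~\ref{s:cond-homo}. First I would fix $n\in\Nbb$ and apply Proposition~\ref{p:cond-lip} to the $\Gbf$-diagrams $\Xcal^{n}$ and $\Ycal^{n}$, whose spaces indexed by $\iota$ are precisely $U^{n}=X_{\iota}^{n}$ and $V^{n}=Y_{\iota}^{n}$. Since $\Gbf$ is still a complete poset category, the Proposition applies verbatim and yields
\[
  \aikd\big([\Xcal^{n}\rel U^{n}],\,[\Ycal^{n}\rel V^{n}]\big)
  \leq
  (2\size{\Gbf}+1)\cdot\ikd(\Xcal^{n},\Ycal^{n}).
\]

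Next I would rewrite the left-hand side using homogeneity. By the relation $[\Xcal^{n}\rel U^{n}]=n\cdot[\Xcal\rel U]$ from Section~\ref{s:cond-homo}, and the analogous identity for $\Ycal$, the two arguments of $\aikd$ become $n\cdot[\Xcal\rel U]$ and $n\cdot[\Ycal\rel V]$. Here I would invoke the scaling behaviour of $\aikd$ under the $\Rbb_{\geq0}$-action: by Theorem~\ref{p:tropical-summary}(iv) there is a distance-preserving homomorphism of $\prob[\Gbf]$ into a Banach space, under which multiplication by $n$ multiplies norms, hence distances, by $n$, so
\[
  \aikd\big(n\cdot[\Xcal\rel U],\,n\cdot[\Ycal\rel V]\big)
  =
  n\cdot\aikd\big([\Xcal\rel U],\,[\Ycal\rel V]\big).
\]
Combining the two displays gives
\[
  n\cdot\aikd\big([\Xcal\rel U],\,[\Ycal\rel V]\big)
  \leq
  (2\size{\Gbf}+1)\cdot\ikd(\Xcal^{n},\Ycal^{n}).
\]

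Finally I would divide by $n$ and let $n\to\infty$. The left-hand side is then the constant $\aikd([\Xcal\rel U],[\Ycal\rel V])$, while the right-hand side converges to $(2\size{\Gbf}+1)\cdot\aikd(\Xcal,\Ycal)$ by the very definition $\aikd(\Xcal,\Ycal)=\lim_{n}\frac1n\ikd(\Xcal^{n},\Ycal^{n})$. This yields the claimed bound.

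The argument is short because all the substantive work already resides in Proposition~\ref{p:cond-lip} and in the homogeneity identity; the only points requiring care are verifying that the $\iota$-indexed space of $\Xcal^{n}$ is genuinely $U^{n}$, so that the Proposition applies to the powers, and justifying the linearity $\aikd(n\cdot a,n\cdot b)=n\cdot\aikd(a,b)$ from the cone structure. I expect the latter to be the main, though minor, obstacle, since it is exactly the ingredient that permits the $\ikd$ on the right-hand side of the Proposition to be upgraded to an $\aikd$ after dividing by $n$ and taking the limit.
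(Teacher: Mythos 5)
Your proof is correct and follows exactly the route the paper intends: the paper gives no written proof beyond the remark that the corollary follows from Proposition~\ref{p:cond-lip} via the homogeneity identity $[\Xcal^{n}\rel U^{n}]=n\cdot[\Xcal\rel U]$ of Section~\ref{s:cond-homo}, which is precisely your argument of applying the proposition to tensor powers, pulling out the factor $n$ using the cone structure, and passing to the limit $\frac1n\ikd(\Xcal^{n},\Ycal^{n})\to\aikd(\Xcal,\Ycal)$. No gaps.
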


Before we prove Proposition~\ref{p:cond-lip} we will need some
preparatory lemmas. 

\begin{lemma}{p:dist-cond-types}
Let $\Acal$ be a $\Gbf$-diagram of probability spaces and $E$ be a
space in it.  Let $\qbf: E^{n}\to(\Delta E,\tau_{n})$ be the empirical
reduction. Then for any $n\in\Nbb$ and any $\bar {e},\bar {e}'\in
E^{n}$
\[  
  \ikd(\Acal^{n}\rel \bar{e},\Acal^{n}\rel \bar{e}')
  \leq 
  {n}\cdot\|\ent_{*}(\Acal)\|_{1}\cdot\|\qbf(\bar{e})-\qbf(\bar{e}')\|_{1}
\]  
\end{lemma}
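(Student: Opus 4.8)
The plan is to reduce the estimate to a one-atom comparison by exploiting that conditioning factorizes over tensor powers. First I would expand the conditioning of the power $\Acal^{n}=(\Scal^{n},\pi^{\otimes n})$ on an atom $\bar e=(e_{1},\dots,e_{n})\in E^{n}$: the fibre of the power map $\sigma_{0\iota}^{n}$ over $\bar e$ is $\prod_{k}\sigma_{0\iota}^{-1}(e_{k})$, so the conditioned initial distribution is the product $\bigotimes_{k}\pi_{0}(\cdot\rel e_{k})$. Transporting through the isomorphism~(\ref{eq:distributions-iso}) and then grouping equal atoms, this identifies
\[
  \Acal^{n}\rel\bar e
  \;\cong\;
  \bigotimes_{k=1}^{n}(\Acal\rel e_{k})
  \;\cong\;
  \bigotimes_{a\in E}(\Acal\rel a)^{\otimes m_{a}},
  \qquad m_{a}:=\big|\set{k\st e_{k}=a}\big|,
\]
and similarly $\Acal^{n}\rel\bar e'\cong\bigotimes_{a}(\Acal\rel a)^{\otimes m_{a}'}$ with $m_{a}'=\big|\set{k\st e_{k}'=a}\big|$. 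Since $m_{a}=n\cdot\qbf(\bar e)(a)$, we have $\sum_{a}|m_{a}-m_{a}'|=n\,\|\qbf(\bar e)-\qbf(\bar e')\|_{1}$, which is where the empirical distances will enter.

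Next I would record two elementary properties of the intrinsic entropy distance under tensoring, both immediate from the additivity of $\ent_{*}$ and the fan-definition of $\ikd$. Tensoring a fan $(\Xcal\ot\Wcal\to\Ycal)$ with a fixed diagram $\Zcal$ alters neither of the two entropy differences that define $\kd$, whence the \emph{monotonicity} $\ikd(\Xcal\otimes\Zcal,\Ycal\otimes\Zcal)\leq\ikd(\Xcal,\Ycal)$. Moreover the fan $(\Ycal\otimes\Zcal\ot\Ycal\otimes\Zcal\to\Ycal)$ whose right leg forgets the $\Zcal$-factor has $\kd=\|\ent_{*}\Zcal\|_{1}$, giving the \emph{forgetting bound} $\ikd(\Ycal\otimes\Zcal,\Ycal)\leq\|\ent_{*}\Zcal\|_{1}$.

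With these in hand I would telescope over the atoms $a\in E$, replacing the exponent $m_{a}$ by $m_{a}'$ one atom at a time while holding the remaining factors fixed. Monotonicity collapses each step to a comparison of two powers of the single diagram $\Acal\rel a$, and — assuming $m_{a}\geq m_{a}'$ after possibly swapping the roles of $\bar e,\bar e'$ — the forgetting bound applied to the surplus $(\Acal\rel a)^{\otimes(m_{a}-m_{a}')}$ gives $\ikd\big((\Acal\rel a)^{\otimes m_{a}},(\Acal\rel a)^{\otimes m_{a}'}\big)\leq|m_{a}-m_{a}'|\cdot\|\ent_{*}(\Acal\rel a)\|_{1}$. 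Summing the telescoped terms via the triangle inequality yields
\[
  \ikd\big(\Acal^{n}\rel\bar e,\,\Acal^{n}\rel\bar e'\big)
  \;\leq\;
  \sum_{a\in E}|m_{a}-m_{a}'|\cdot\|\ent_{*}(\Acal\rel a)\|_{1}.
\]

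Finally I would bound each conditioned total entropy by $\|\ent_{*}(\Acal\rel a)\|_{1}\leq\|\ent_{*}\Acal\|_{1}$ and pull this constant out, so that the right-hand side is at most $\|\ent_{*}\Acal\|_{1}\sum_{a}|m_{a}-m_{a}'|=n\,\|\ent_{*}\Acal\|_{1}\,\|\qbf(\bar e)-\qbf(\bar e')\|_{1}$, which is the claim. I expect this last comparison to be the main obstacle and the only non-formal step: a single conditional entropy $\ent(A_{i}\rel a)=H(A_{i}\mid E=a)$ may exceed $H(A_{i})$, so the inequality $\|\ent_{*}(\Acal\rel a)\|_{1}\leq\|\ent_{*}\Acal\|_{1}$ cannot be read off atom-by-atom from concavity; what is unconditionally available is only its $p_{U}$-average $\int\|\ent_{*}(\Acal\rel u)\|_{1}\,\d p(u)\leq\|\ent_{*}\Acal\|_{1}$. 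The delicate part of the argument is therefore to secure the pointwise entropy bound in the present setting, or else to reorganize the telescoping so that only the averaged inequality is invoked.
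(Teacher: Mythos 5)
Your argument is essentially the paper's own proof in a slightly different bookkeeping. The paper likewise identifies $\Acal^{n}\rel\bar e$ with $\bigotimes_{i}(\Acal\rel e_{i})$, matches the two tensor products by a permutation $\sigma$ realizing the optimal overlap, $|\set{i\st e_{i}\neq e'_{\sigma i}}|=\tfrac n2\|\qbf(\bar e)-\qbf(\bar e')\|_{1}$, and couples matched factors by the identity and mismatched ones by the independence coupling $\Acal\rel e_{i}\oto[\otimes]\Acal\rel e'_{\sigma i}$, whose $\kd$ is $\|\ent_{*}(\Acal\rel e_{i})\|_{1}+\|\ent_{*}(\Acal\rel e'_{\sigma i})\|_{1}$. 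Summing over the mismatched coordinates produces exactly the quantity your telescoping yields, $\sum_{a\in E}|m_{a}-m'_{a}|\cdot\|\ent_{*}(\Acal\rel a)\|_{1}$; your monotonicity and forgetting bounds are both correct, so up to this point the two arguments coincide.

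The obstacle you flag in your last paragraph is genuine, and it is precisely the step over which the paper's proof passes in silence: its final inequality requires $\|\ent_{*}(\Acal\rel a)\|_{1}\leq\|\ent_{*}(\Acal)\|_{1}$ for each individual atom $a$, and, as you say, only the $p$-average of the left-hand side is controlled. The discrepancy is not cosmetic. For the single-arrow diagram $A_{0}\to E$ with $\un E=\set{0,1}$, $p(1)=\epsilon$, $A_{0}\rel 0$ a single atom and $A_{0}\rel 1$ uniform on $N$ points, take $n=1$, $\bar e=(0)$, $\bar e'=(1)$: then $\ikd(\Acal\rel 0,\Acal\rel 1)=\log N$, while the claimed bound is $2\|\ent_{*}(\Acal)\|_{1}=4\ent(E)+2\epsilon\log N$, which tends to $0$ as $\epsilon\to0$ with $N$ fixed. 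So the constant in the statement must be enlarged, e.g.\ to $\max_{a\in E}\|\ent_{*}(\Acal\rel a)\|_{1}$ (itself at most $\sum_{j}\log|A_{j}|$), which your telescoping delivers with no extra work. This repair is harmless downstream: the lemma is only used inside the double integral of Lemma~\ref{p:int-dist-cond}, where any constant independent of $n$ still gives the required $\o(n)$ via Sanov's theorem; alternatively, as you suggest, one can keep only the averaged inequality and reorganize that lemma accordingly. In short, your proof is the paper's proof, and the ``delicate part'' you isolate is a real defect of the stated constant rather than a gap in your argument alone.
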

\begin{proof}
 To prove the lemma we construct a coupling between $\Acal^{n}\rel
 \bar{e}$ and $\Acal^{n}\rel \bar{e}'$ in the following manner. Note
 that there exists a permutation $\sigma\in S_{n}$ such that
\[
  \big|\!\set{i\st e_{i}\neq e_{\sigma i}'}\!\big| 
  =
  \frac{n}{2}\cdot\|\qbf(\bar e)-\qbf(\bar e')\|_{1}
\]
Let 
\begin{align*}
  I
  &=
  \set{i \st e_{i}=e'_{\sigma i}}
  \\
  \tilde I
  &=
  \set{i \st e_{i}\neq e'_{\sigma i}}  
\end{align*}
Using that $|\tilde I|=\frac{n}{2}\cdot\|\qbf(\bar e)-\qbf(\bar
e')\|_{1}$ we can estimate
\begin{align*}
  \ikd\Big(\Acal^{n}\rel\bar e\,,\,\Acal^{n}\rel\bar e'\Big)
  &=
  \ikd\left(
    \bigotimes_{i=1}^{n}(\Acal\rel e_{i})
    \,,\,
    \bigotimes_{i=1}^{n}(\Acal\rel e'_{\sigma i})\right)
  \\
  &\leq
  \sum_{i\in I}
    \kd(\Acal\rel e_{i}\ootoo[=]\Acal\rel e'_{\sigma i})
  \,+\,  
  \sum_{i\in\tilde I}
    \kd(\Acal\rel e_{i}\ootoo[\otimes]\Acal\rel e'_{\sigma i})
  \\
  &\leq
  n\cdot\|\ent_{*}(\Acal)\|_{1}\cdot\|\qbf(\bar e)-\qbf(\bar e')\|_{1}
\end{align*}
where $\Acal\oto[=]\Bcal$ denotes the isomorphism coupling of two
naturally isomorphic diagrams, while $\Acal\oto[\otimes]\Bcal$
denotes the ``independence'' coupling.
\end{proof}

\begin{lemma}{p:int-dist-cond}
Let $\Acal$ be a $\Gbf$-diagram of probability spaces and $E$ be a
space in $\Acal$. Then
\[  
  \int_{E^{n}}
  \ikd(\Acal^{n},\Acal^{n}\rel\bar e)\d p(\bar{e})
  \leq
  2n\cdot\size{\Gbf}\cdot\ent(E) + \o(n)
\]  
\end{lemma}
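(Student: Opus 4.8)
The plan is to estimate, for each \emph{fixed} reference point $\bar e_0\in E^n$, the single quantity $\ikd(\Acal^n,\Acal^n\rel\bar e_0)$ via the slicing inequality of Proposition~\ref{p:slicing}, and only at the very end to integrate the resulting estimate against $\d p(\bar e_0)$. Concretely, I would apply Proposition~\ref{p:slicing} to the diagram $\Xcal=\Acal^n$ sliced over its space $U=E^n$ (the required fan being exactly the one produced by the minimal-fan construction recalled right after that proposition), taking as the second diagram $\Ycal=\Acal^n\rel\bar e_0$. Since $\ent(E^n)=n\cdot\ent(E)$, this yields
\[
  \ikd(\Acal^n,\Acal^n\rel\bar e_0)
  \leq
  \int_{E^n}\ikd(\Acal^n\rel\bar e,\,\Acal^n\rel\bar e_0)\,\d p(\bar e)
  + 2n\cdot\size{\Gbf}\cdot\ent(E),
\]
so the main term $2n\size{\Gbf}\ent(E)$ appears already at this stage, and everything reduces to showing that the integral is negligible on average.

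Next I would control the integrand using Lemma~\ref{p:dist-cond-types}, which gives
\[
  \ikd(\Acal^n\rel\bar e,\,\Acal^n\rel\bar e_0)
  \leq
  n\cdot\|\ent_{*}(\Acal)\|_1\cdot\|\emp(\bar e)-\emp(\bar e_0)\|_1 .
\]
Substituting this bound and then integrating over $\bar e_0$ as well, the double integral factors entirely through the total-variation distance between empirical distributions:
\[
  \int_{E^n}\ikd(\Acal^n,\Acal^n\rel\bar e_0)\,\d p(\bar e_0)
  \leq
  n\|\ent_{*}(\Acal)\|_1
  \int_{E^n}\int_{E^n}\|\emp(\bar e)-\emp(\bar e_0)\|_1\,\d p(\bar e)\,\d p(\bar e_0)
  + 2n\size{\Gbf}\ent(E).
\]

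The final step is a concentration estimate. By the triangle inequality the double integral is at most $2\int_{E^n}\|\emp(\bar e)-p\|_1\,\d p(\bar e)$, that is, twice the expected total-variation deviation of the empirical distribution of $n$ i.i.d.\ $E$-valued samples from their law $p$. On a fixed finite alphabet a coordinatewise variance bound shows this expectation is $\O(1/\sqrt n)$, hence $\o(1)$; multiplying by the prefactor $n\|\ent_{*}(\Acal)\|_1$, whose second factor does not depend on $n$, produces a contribution of order $\O(\sqrt n)=\o(n)$, and the claimed inequality follows.

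The only genuinely non-formal ingredient is this last estimate $\int_{E^n}\|\emp(\bar e)-p\|_1\,\d p(\bar e)=\o(1)$, which is a standard $L^1$ law of large numbers for empirical measures on a finite set; I expect the bulk of the effort to be bookkeeping rather than the estimate itself. The one point I would check carefully is that the slicing fan demanded by Proposition~\ref{p:slicing} is genuinely available for $\Xcal=\Acal^n$ with $U=E^n$ — which is precisely the minimal-fan construction described after that proposition — so that its hypotheses hold and $\ent(U)=n\,\ent(E)$ enters with exactly the stated constant $2\size{\Gbf}$.
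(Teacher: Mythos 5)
Your proposal is correct and follows essentially the same route as the paper's proof: slice the first argument via Proposition~\ref{p:slicing} to produce the main term $2n\cdot\size{\Gbf}\cdot\ent(E)$ plus a double integral, bound the integrand by Lemma~\ref{p:dist-cond-types}, and show that the resulting expected total-variation term between empirical distributions is $\o(n)$. The only cosmetic difference is in the last step, where the paper invokes Sanov's theorem to see that the double integral of $\|\pi-\pi'\|_1$ against $\tau_n\otimes\tau_n$ vanishes, while you use the elementary $\O(1/\sqrt{n})$ variance bound for empirical measures on a finite alphabet; both are valid.
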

\begin{proof}
First we apply Proposition~\ref{p:slicing} slicing the first argument
\begin{align*}
  \int_{E^{n}}
    &
    \ikd(\Acal^{n},\Acal^{n}\rel\bar e)
  \d p(\bar e)
  \\
  &\leq
  \int_{E^{n}}
  \int_{E^{n}}
      \ikd(\Acal^{n}\rel\bar e',\Acal^{n}\rel\bar e)
      \d p(\bar e') \d p(\bar e)
  +
  2n\cdot\size{\Gbf}\cdot\ent(E) 
\end{align*}
We will argue now that the double integral on the right-hand side
grows sub-linearly with $n$.  We estimate the double integral by
applying Lemma \ref{p:dist-cond-types} to the integrand
\begin{align*}
  \int_{E^{n}}
  \int_{E^{n}}
  &
      \ikd(\Acal^{n}\rel\bar e',\Acal^{n}\rel\bar e)
      \d p(\bar e') \d p(\bar e)
  \\    
  &\leq
  \int_{E^{n}}
  \int_{E^{n}}
      n\cdot
      \size{\Gbf}\cdot
      |\ent_{*}(\Acal)|_{1}\cdot
      |\qbf(\bar e)-\qbf(\bar e')|_{1}
  \d p(\bar e') \d p(\bar e)
  \\    
  &=
   n\cdot
  \size{\Gbf}\cdot
  |\ent_{*}(\Acal)|_{1}\cdot\int_{\Delta E}
  \int_{\Delta E}     
      |\pi-\pi'|_{1}
  \d \tau_{n}(\pi)
  \d \tau_{n}(\pi')
  =
  \o(n)
\end{align*}
where the convergence to zero of the last double integral follows from
Sanov's theorem.  
\end{proof}

\begin{corollary}{p:dist-cond}
  Let $\Acal$ be a $\Gbf$-diagram and $E$ a probability space
  included in $\Acal$. Then
  \[
    \aikd\Big(\bernoulli{\Acal},[\Acal\rel E]\Big)
    \leq 
    2\size{\Gbf}\cdot\ent(E)
  \]
\end{corollary}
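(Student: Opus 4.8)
The plan is to compute the asymptotic distance directly from the defining representative sequences and to reduce everything to the two preceding lemmas. By definition,
\[
  \aikd\big(\bernoulli{\Acal},[\Acal\rel E]\big)
  =
  \lim_{n\to\infty}\frac1n\ikd\big(\Acal^n,\Ycal(n)\big),
  \qquad
  \Ycal(n)=\bigotimes_{u\in E}(\Acal\rel u)^{\lfloor n\cdot p(u)\rfloor},
\]
where $\Acal^n$ is the linear representative of $\bernoulli{\Acal}$ and $\Ycal(n)$ is the representative of $[\Acal\rel E]$ fixed in the definition of classical--tropical conditioning. The key observation is that, since the tensor product is commutative, the isomorphism type of a conditioned power $\Acal^n\rel\bar e=\bigotimes_{i=1}^n(\Acal\rel e_i)$ depends only on the empirical distribution $\qbf(\bar e)$. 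In particular $\Ycal(n)$ is, up to a bounded correction, itself such a conditioned power, namely $\Acal^n\rel\bar e^{*}$ at the ``truncated-typical'' point $\bar e^{*}$ in which each value $u$ is repeated exactly $\lfloor n\cdot p(u)\rfloor$ times.

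First I would dispose of the padding discrepancy. Writing $m:=\sum_{u\in E}\lfloor n\cdot p(u)\rfloor$ one has $n-m<|E|$, so appending $n-m$ copies of a fixed atom $u_0\in E$ to $\bar e^{*}$ gives a point $\bar e^{**}\in E^n$ with
\[
  \Acal^n\rel\bar e^{**}=\Ycal(n)\otimes(\Acal\rel u_0)^{\,n-m}.
\]
The extra factor contributes at most its own entropy to the intrinsic distance, so $\ikd\big(\Acal^n\rel\bar e^{**},\Ycal(n)\big)\leq(n-m)\cdot\|\ent_{*}(\Acal\rel u_0)\|_{1}$, which is bounded independently of $n$ and hence $\o(n)$.

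Next I would transfer the \emph{averaged} estimate of Lemma~\ref{p:int-dist-cond} to the single point $\bar e^{**}$. For every $\bar e\in E^n$ the triangle inequality gives
\[
  \ikd\big(\Acal^n,\Acal^n\rel\bar e^{**}\big)
  \leq
  \ikd\big(\Acal^n,\Acal^n\rel\bar e\big)
  +\ikd\big(\Acal^n\rel\bar e,\Acal^n\rel\bar e^{**}\big),
\]
and I would integrate this against $\d p(\bar e)$ over $E^n$, the left-hand side being constant in $\bar e$. The first integral is at most $2n\cdot\size{\Gbf}\cdot\ent(E)+\o(n)$ by Lemma~\ref{p:int-dist-cond}. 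For the second, Lemma~\ref{p:dist-cond-types} bounds the integrand by $n\cdot\|\ent_{*}\Acal\|_{1}\cdot\|\qbf(\bar e)-\qbf(\bar e^{**})\|_{1}$; since $\qbf(\bar e^{**})\to p$ and $\int_{E^n}\|\qbf(\bar e)-p\|_{1}\,\d p(\bar e)\to0$ by Sanov's theorem, this integral is $\o(n)$. Combining the two displays with the padding bound yields $\ikd\big(\Acal^n,\Ycal(n)\big)\leq 2n\cdot\size{\Gbf}\cdot\ent(E)+\o(n)$; dividing by $n$ and letting $n\to\infty$ gives the assertion.

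The entropy bookkeeping for the padding factor and the convergence $\qbf(\bar e^{**})\to p$ are routine. The main obstacle is the transfer step: Lemma~\ref{p:int-dist-cond} only controls the \emph{average} over $\bar e$, whereas the distance we must bound is evaluated at the single, deterministic sequence $\bar e^{**}$ underlying $\Ycal(n)$. Bridging this gap is precisely where Lemma~\ref{p:dist-cond-types}, together with the Sanov concentration of the empirical measures around $p$, is indispensable, since it shows that the value at $\bar e^{**}$ exceeds the average by only a sublinear amount.
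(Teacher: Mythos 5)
Your argument is correct, but it takes a genuinely different route from the paper's. The paper stays entirely in the tropical setting: it uses the homogeneity identity $[\Acal^{n}\rel E^{n}]=n\cdot[\Acal\rel E]$ to write $\aikd\big(\bernoulli{\Acal},[\Acal\rel E]\big)=\tfrac1n\aikd\big(\bernoulli{\Acal^{n}},[\Acal^{n}\rel E^{n}]\big)$, observes that $[\Acal^{n}\rel E^{n}]$ is by definition the convex combination $\int_{E^{n}}\bernoulli{\Acal^{n}\rel\bar e}\,\d p(\bar e)$, pulls the integral outside $\aikd$ by convexity of the metric (which rests on the embedding of $\prob[\Gbf]$ into a Banach space from Theorem~\ref{p:tropical-summary}), and then concludes with $\aikd\leq\ikd$ and Lemma~\ref{p:int-dist-cond}. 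You instead work with the explicit classical representative $\Ycal(n)=\bigotimes_{u}(\Acal\rel u)^{\lfloor n p(u)\rfloor}$, identify it, after an $O(1)$ padding correction, with the conditioned power $\Acal^{n}\rel\bar e^{**}$ at a near-typical string, and transfer the \emph{averaged} bound of Lemma~\ref{p:int-dist-cond} to that single point via the triangle inequality, Lemma~\ref{p:dist-cond-types}, and concentration of the empirical measure $\qbf(\bar e)$ around $p$. Both proofs have Lemma~\ref{p:int-dist-cond} as their quantitative core; yours avoids invoking convexity of $\aikd$ over integrals of tropical diagrams (and thus the Banach-space machinery) at the price of a second, explicit appeal to Sanov-type concentration, which the paper only needs inside the proof of Lemma~\ref{p:int-dist-cond}. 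The steps you flag as routine are indeed fine: the padding estimate follows from the projection fan exhibiting $\ikd(\Ccal\otimes\Dcal,\Ccal)\leq\|\ent_{*}\Dcal\|_{1}$ with $\Dcal=(\Acal\rel u_{0})^{n-m}$ and $n-m<|E|$, and $\|\qbf(\bar e^{**})-p\|_{1}=O(|E|/n)$.
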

\begin{proof}
Let $n\in\Nbb$. Then
\begin{align*}
  \aikd\Big(\bernoulli{\Acal},[\Acal\rel E]\Big)
  &=
  \frac{1}{n}
  \aikd\Big(\bernoulli{\Acal^{n}},[\Acal^{n}\rel E^{n}]\Big)
  \\
  &=
  \frac1n
  \aikd\left(
     \bernoulli{\Acal^{n}},
     \int_{E^{n}}\bernoulli{\Acal^{n}\rel\bar e}\d p(\bar e)
  \right)
  \\
  &\leq
  \frac1n
  \int_{E^{n}}
    \aikd\left(\bernoulli{\Acal^{n}},\,\bernoulli{\Acal^{n}\rel\bar e}\right)
  \d p(\bar e)
  \\
  &=
  \frac1n
  \int_{E^{n}}
  \aikd(\Acal^{n},\Acal^{n}\rel\bar e)
  \d p(\bar e)
  \\
  &\leq
  2\cdot\size{\Gbf}\cdot
  \ent(E)
  +
  \o(n^0)
\end{align*}
where we used Lemma \ref{p:int-dist-cond} and the fact that
$\aikd\leq\ikd$ in the last line.  We finish the proof by taking the
limit $n\to\infty$.
\end{proof}

\begin{proof}[of Proposition~\ref{p:cond-lip}] 
We start with a note on general terminology: a reduction $f:A\to B$ of
probability spaces can also be considered as a fan $\Fcal:=(A\ot[=]
A\to[f]B)$. Then the entropy distance of $f$ is
\[
  \kd(f):=\kd(\Fcal)=\ent A-\ent B
\]
If the reduction $f$ is a part of a bigger diagram containing also
space $U$, then the following inequality holds
\[
\int_{U}\kd(f\rel u)\d p(u)\leq \kd(f)
\]

Let $\Kcal\in\prob\<\Gbf,\Lambdabf_{2}\>$ 
\[
\Kcal=
\left(
\begin{cd}[row sep=0mm,column sep=small]
\Xcal
\&
\Zcal
\arrow{l}[above]{f}
\arrow{r}{g}
\&
\Ycal
\end{cd}
\right)
\in\prob\<\Gbf,\Lambdabf_{2}\>=\prob\<\Lambdabf_{2},\Gbf\>
\]
be an optimal coupling between $\Xcal$ and $\Ycal$.
It can also we viewed as a $\Gbf$-diagram of two-fans,
$\Kcal=\set{\Kcal_{i}}_{i\in\Gbf}$ each of which is a minimal coupling
between $X_{i}$ and $Y_{i}$. Among them is the minimal fan 
$\Wcal:=\Kcal_{\iota}=(U\oot[f_{\iota}] W\too[g_{\iota}] V)$.

We use triangle inequality to bound the distance $\aikd\Big([\Xcal\rel
  U],[\Ycal\rel V]\Big)$ by four summands as follows.
\begin{align*}
  \aikd\Big([\Xcal\rel U],[\Ycal\rel V]\Big)
  \leq&
  \aikd\Big([\Xcal\rel U],[\Zcal\rel U]\Big) \;+
  \aikd\Big([\Zcal\rel U],[\Zcal\rel W]\Big) \,+
  \\&
  \aikd\Big([\Zcal\rel W],[\Zcal\rel V]\Big) +
  \aikd\Big([\Zcal\rel V],[\Ycal\rel V]\Big)
\end{align*}
We will estimate each of the four summands separately. The bound for
the first one is as follows.
\begin{align*}
  \aikd\Big(&[\Xcal\rel U],[\Zcal\rel U]\Big)
  =
  \aikd
  \left(
    \int_{U}
      \bernoulli{\Xcal\rel u}
    \d p(u)
  ,
    \int_{U}
      \bernoulli{\Zcal\rel u}
    \d p(u)
  \right)
  \\\label{eq:cond-case-1-triang}
  &\leq
  \int_{U}\aikd\left(\bernoulli{\Xcal\rel u},\,
               \bernoulli{\Zcal\rel u}\right)\d p(u)  
  =
  \int_{U}\aikd\left(\Xcal\rel u,\,
  \Zcal\rel u\right)\d p(u)
  \\
  &
  \leq
  \int_{U}\ikd\left(\Xcal\rel u,\,
  \Zcal\rel u\right)\d p(u)
  \leq
  \int_{U}\kd(f\rel u)\d p(u)
  \\
  &\leq
   \sum_{i\in\Gbf}\int_{U}\kd(f_{i}\rel u)\d p(u)
   =
   \sum_{i\in\Gbf}\kd(f_{i})
   =
   \kd(f)
\end{align*}
An analogous calculation shows that  
\[
  \aikd\Big([\Zcal\rel V],[\Ycal\rel V]\Big)
  \leq
  \kd(g)
\]

To bound the second summand we will use Corollary~\ref{p:dist-cond}
\begin{align*}
  \aikd\Big([\Zcal\rel U],[\Zcal\rel W]\Big)
  &=
  \aikd
  \left(
    \int_{U}\bernoulli{\Zcal\rel u}\d p(u)
    ,
    \int_{W} \bernoulli{\Zcal\rel w} \d p(w)
  \right)
  \\
  &=
  \aikd
  \left(
    \int_{U}\bernoulli{\Zcal\rel u} \d p(u)
    ,
    \int_{U}\int_{W\rel u}\bernoulli{\Zcal\rel w}\d p(w|u)\d p(u)
  \right)
  \\
  &\leq 
  \int_{U}
    \aikd
    \left(
      \bernoulli{\Zcal\rel u},\int_{W\rel u}\bernoulli{\Zcal\rel w} \d p(w|u)
    \right)
  \d p(u)
\end{align*}

We will now use Corollary \ref{p:dist-cond} with $\Acal = \Zcal\rel u$
and $E = W \rel u$ to estimate the integrand. Then,
\begin{align*}
  \aikd\Big([\Zcal\rel U],[\Zcal\rel W]\Big)
  &=
  \int_{U}
  \aikd\left(
    \bernoulli{\Zcal\rel u},\int_{W\rel u}\bernoulli{\Zcal\rel w} \d p(w|u)
  \right)
  \d p(u) 
  \\
  &\leq 
  2\size{\Gbf} \cdot \int_U \ent( W \rel u ) \d p(u) 
  \\
  &\leq 
  2\size{\Gbf} \cdot \ent( W \rel U ) 
  \leq 
  2\size{\Gbf} \cdot \kd(f)
\end{align*}
Similarly
\[
  \aikd\Big([\Zcal\rel W],[\Zcal\rel V]\Big)
  \leq
  2\size{\Gbf} \cdot \kd(g)
\]
Combining the estimates we get
\[
  \aikd\Big([\Xcal\rel U],[\Ycal\rel V]\Big)
  \leq 
  (2\size{\Gbf}+1)\cdot(\kd{(f)}+\kd(g))
  =
  (2\size{\Gbf}+1)\cdot\ikd(\Xcal,\Ycal)
\]
\end{proof}

\subsection{Tropical conditioning}
Let $[\Xcal]$ be a tropical $\Gbf$-diagram and $[U]=[X_{\iota}]$ for
some $\iota\in\Gbf$. Choose a representative 
$\big(\Xcal(n)\big)_{n\in\Nbb_{0}}$ and denote $u(n):=X_{\iota}(n)$.
We define now a conditioned diagram $[\Xcal\rel
  U]$ by the following limit
\[
  [\Xcal\rel U]:=\lim_{n\to\infty}\frac1n [\Xcal(n)\rel U(n)]
\]
Proposition~\ref{p:cond-lip-aikd} guarantees, that the limit exists and
is independent of the choice of representative.
For a fixed $\iota\in\Gbf$ the conditioning is a linear Lipschitz map
\[
  [\,\cdot\;\rel\;\cdot_{\iota}\,]:\prob[\Gbf]\to\prob[\Gbf]
\]

\bibliographystyle{alpha}       
\bibliography{ReferencesCond}

\begin{thebibliography}{MP19b}

\bibitem[KS{\v{S}}12]{Kovavcevic-Hardness-2012}
Mladen Kova{\v{c}}evi{\'c}, Ivan Stanojevi{\'c}, and Vojin {\v{S}}enk.
\newblock On the hardness of entropy minimization and related problems.
\newblock In {\em 2012 IEEE Information Theory Workshop}, pages 512--516. IEEE,
  2012.

\bibitem[MP18]{Matveev-Asymptotic-2018}
Rostislav Matveev and Jacobus~W Portegies.
\newblock Asymptotic dependency structure of multiple signals.
\newblock {\em Information Geometry}, 1(2):237--285, 2018.

\bibitem[MP19a]{Matveev-Tropical-2019}
Rostislav {Matveev} and Jacobus~W. {Portegies}.
\newblock {Tropical diagrams of probability spaces}.
\newblock {\em arXiv e-prints}, page arXiv:1905.04375, May 2019.

\bibitem[MP19b]{Matveev-Tropical-Entropic-2019}
Rostislav {Matveev} and Jacobus~W. {Portegies}.
\newblock {Tropical probability theory and an application to the entropic
  cone}.
\newblock {\em arXiv e-prints}, page arXiv:1905.05351, May 2019.

\bibitem[Vid12]{Vidyasagar-Metric-2012}
Mathukumalli Vidyasagar.
\newblock A metric between probability distributions on finite sets of
  different cardinalities and applications to order reduction.
\newblock {\em IEEE Transactions on Automatic Control}, 57(10):2464--2477,
  2012.

\end{thebibliography}
\end{document}